\def\@settitle{\begin{center}%
  \baselineskip14\p@\relax
    \normalfont\LARGE

  \@title
  \end{center}%
}
\newcolumntype{P}[1]{>{\centering\arraybackslash}p{#1}}
\newcolumntype{M}[1]{>{\centering\arraybackslash}m{#1}}
\DeclareMathAlphabet\EuScript{U}{eus}{m}{n}
\SetMathAlphabet\EuScript{bold}{U}{eus}{b}{n}
\let\oldmarginpar\marginpar
\renewcommand\marginpar[1]{\-\oldmarginpar[\raggedleft\footnotesize #1]%
	{\raggedright\footnotesize #1}}
\theoremstyle{plain}
\newtheorem{thm}{Theorem}[section]
\newtheorem{lemma}[thm]{Lemma}
\newtheorem*{theorem*}{Theorem}
\newtheorem*{corollary*}{Corollary}
\theoremstyle{definition}
\newtheorem{example}[thm]{Example}
\newtheorem{remark}[thm]{Remark}
\numberwithin{equation}{section}
\newtheorem*{theo}{Comparison Lemma}
\newenvironment{ftheo}
  {\begin{mdframed}[innertopmargin = 3pt, innerbottommargin=3pt,skipabove=5pt,skipbelow=5pt,linewidth=0.25pt,nobreak=true,align=center]\begin{theo}}
  {\end{theo}\end{mdframed}}
\def\maintheoref{the \hyperref[thm:main_result_equality_DO_NOT_CREF]{Comparison Lemma}\xspace}
\newcommand{\F}{\mathsf{F}}
\newcommand{\N}{\mathbb{N}}
\newcommand{\Z}{\mathbb{Z}}
\newcommand{\Q}{\mathbb{Q}}
\newcommand{\R}{\mathbb{R}}
\newcommand{\SL}{\on{SL}}
\newcommand{\La}{\Lambda}
\renewcommand{\b}{\beta}
\newcommand{\dd}{\partial}
\newcommand{\sse}{\subset}
\newcommand{\lr}{\longrightarrow}
\newcommand{\Br}{\operatorname{Br}}
\newcommand{\st}{\text{st}}
\newcounter{daggerfootnote}
\newcommand{\G}{\mathsf{G}}
\newcommand{\T}{\mathrm{T}}
\newcommand{\U}{\mathsf{U}}
\newcommand{\s}{\sigma}
\newcommand{\sw}{\mathsf{sw}}
\newcommand{\nw}{\mathsf{nw}}
\newcommand{\se}{\mathsf{se}}
\def\on{\operatorname}
\def \vertbar [#1](#2,#3,#4){
    \draw [#1] (#2,#3) -- (#2,#4);
    \draw [fill=white] (#2,#3) circle [radius=0.1];
    \draw [fill=black] (#2,#4) circle [radius=0.1];
}
\providecommand{\leftsquigarrow}{%
  \mathrel{\mathpalette\reflect@squig\relax}%
}
\newcommand{\reflect@squig}[2]{%
  \reflectbox{$\m@th#1\rightsquigarrow$}%
}
\def\Ddots{\mathinner{\mkern1mu\raise\p@
\vbox{\kern7\p@\hbox{.}}\mkern2mu
\raise4\p@\hbox{.}\mkern2mu\raise7\p@\hbox{.}\mkern1mu}}
\def \horline [#1](#2,#3,#4){
    \draw [#1] (#2,#4) -- (#3,#4);
    \draw [fill=white] (#2,#4) circle [radius=0.1];
    \draw [fill=black] (#3,#4) circle [radius=0.1];
}
\def \crossing (#1,#2)(#3,#4){
\draw (#1,#2) -- (#3,#4);
\draw (#1,#4) -- (#3,#2);
}
\DeclareFontFamily{U}{mathb}{}
\DeclareFontShape{U}{mathb}{m}{n}{
  <-5.5> mathb5
  <5.5-6.5> mathb6
  <6.5-7.5> mathb7
  <7.5-8.5> mathb8
  <8.5-9.5> mathb9
  <9.5-11.5> mathb10
  <11.5-> mathbb12
}{}
\tikzset{tangent/.style={decoration={markings,mark=at position #1 with {
      \coordinate (tangent point-\pgfkeysvalueof{/pgf/decoration/mark info/sequence number}) at (0pt,0pt);
      \coordinate (tangent unit vector-\pgfkeysvalueof{/pgf/decoration/mark info/sequence number}) at (1,0pt);
      \coordinate (tangent orthogonal unit vector-\pgfkeysvalueof{/pgf/decoration/mark info/sequence number}) at (0pt,1);
      }},postaction=decorate},
    use tangent/.style={
        shift=(tangent point-#1),
        x=(tangent unit vector-#1),
        y=(tangent orthogonal unit vector-#1)
    },
    use tangent/.default=1
    }
\begin{document}

	\title{On wall-crossing coordinates in Cerf theory}

\author{Roger Casals}
	\address{University of California Davis, Dept. of Mathematics, USA}
	\email{casals@ucdavis.edu}

	\subjclass[2010]{Primary: 53D10. Secondary: 57K43, 13F60.}
		
\maketitle
\begin{abstract} We relate Bruhat numbers in real Morse theory to cluster variables in braid varieties. This provides instances of wall-crossing coordinates in the study of Cerf diagrams.
\end{abstract}



\section{Introduction}\label{sec:intro}

The object of this short note is to relate Bruhat numbers in real Morse theory to cluster variables in braid varieties. Specifically, the Bruhat numbers introduced in \cite{pushkar2021enhancedbruhatdecompositionmorse}, refining \cite{barannikov:hal-01745109}, and the cluster structures constructed in \cite{CGGLSS}, building on \cite{CasalsGao24,CasalsWeng,CasalsZaslow}. A motivating factor is J.~Cerf's seminal work \cite{Cerf70} in parametric Morse theory.

\begin{center}
	\begin{figure}[h!]
		\centering
		\includegraphics[width=\textwidth]{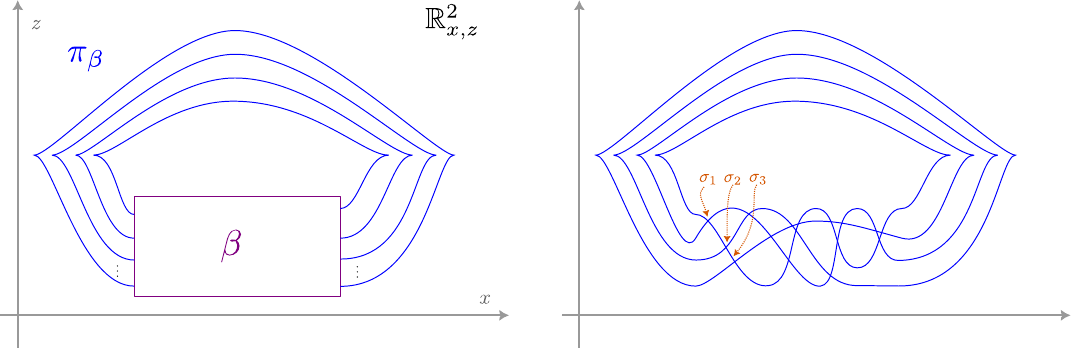}
		\caption{(Left) The general form of a Cerf diagram $\pi_\beta$ associated to a positive braid word $\beta$. Each positive crossing of the braid word corresponds to a value exchange between critical points. (Right) An example with $\beta=(\s_1\s_2\s_3)^2
    \s_1\s_2\s_1\s_3(\s_1\s_2\s_1)^2$.}\label{fig:FrontsBeta}
	\end{figure}
\end{center}

\subsection{Scientific context} A starting point is the Cerf diagram $\pi_\beta$ associated to a positive braid word $\beta$, as depicted in \cref{fig:FrontsBeta} (left). The $z$-axis records the values of the critical points for a 1-parametric family of generalized Morse functions $f_x:\R^n\lr\R$, $x\in\R$, where $f_{x}$ has no critical points if $|x|$ is large enough, and all $f_x$ are assumed linear at infinity. This class of Cerf diagrams lies at the core of the global study of 1 and 2-parametric Morse theory, cf.~e.g.~\cite[Th\'eor\`eme 2']{Cerf70} and its proof.\footnote{In \cite{Cerf70}, the study of the homotopy type of the corresponding spaces of (generalized) Morse functions, focused on connectivity, has three parts: local, semi-local and global. See also the outline \cite{Cerf_outline}.} In fact, the semi-local lemmas in \cite[Chapitres II-IV]{Cerf70} are used to reduce any Cerf diagram with the same boundary conditions, on $x$ and each $f_x$, to one of such form. In the {\it lemme fondamental} of \cite[Chapitre VI]{Cerf70}, generators of a key relative fundamental group are described by using the Bruhat decomposition of $\on{GL}_n$, indexed by its Weyl group $S_n$. For instance, the generator denoted $\delta_{i,g}$ in ibid is described by a 1-parametric family of Morse functions realizing a value exchange between points of Morse index $i+1$. The use of the Bruhat decomposition in the context of (framed) Morse functions is further elucidated by S.~Barannikov in \cite{barannikov:hal-01745109}, see also the enlightening results of \cite{laudenbach2015articlesabarannikov,peutrec2011precisearrheniuslawpforms}, and more recently by P.~Pushkar and M.~Tyomkin in \cite{pushkar2021enhancedbruhatdecompositionmorse}.

Independently, cluster transformations provide a framework for the study of wall-crossing phenomena in algebraic and symplectic geometry, cf.~e.g.~\cite{GHKK,KontsevichSoibelman_Wallcrossing,GMN_Cluster}.\footnote{See also the recorded lectures at the SLMath workshop ``\href{https://www.slmath.org/workshops/783\#overview_workshop}{Cluster algebras and wall-crossing}'' in March 2016.} Specifically, the cluster variables of a cluster algebra determine exponential Darboux coordinates for the chambers, i.e.~the cluster charts, and their mutations are related to the walls of the associated scattering diagram, cf.~\cite[Section 1]{GHKK}. One of the first significant instances of cluster algebras were coordinate rings of Bruhat cells and variations thereof, cf.~\cite{BFZ05}, where the chamber ansatz dictates the cluster variables in terms of generalized minors. The Bruhat decomposition, both in its Borel and unipotent forms, has a crucial role in the study of such cluster algebras. The cluster structures on braid varieties, constructed in \cite{CGGLSS} via the weave calculus of \cite{CasalsWeng,CasalsZaslow}, can also be understood in terms of the Bruhat decomposition and generalized minors, as explained in \cite{casals2025comparingclusteralgebrasbraid}.

\noindent In short, a reading of \cite{Cerf70} and the crucial appearance of the Bruhat decomposition in cluster algebras lead the author to expect that J.~Cerf's results in \cite{Cerf70} adumbrate the presence of wall-and-chamber structures in the study of real parametric Morse theory. The Comparison Lemma below is an exercise aimed at illustrating this insight.

\subsection{Main result} This note will discuss the following lemma, comparing \textcolor{purple}{cluster coordinates}, in red, to \textcolor{blue}{Bruhat numbers}, in blue:

\begin{ftheo}\label{mainlemma}
Let $\beta=\s_{i_1}\s_{i_2}\cdots \s_{i_l}$ be a positive braid word, $(\pi_\beta,\{f_x\})$ its associated Cerf diagram, and $X(w_0\b)\sse\on{Spec}\Z[z_1,\ldots,z_l]$ its braid variety. Then the cluster variable $A_k\in\Z[X(w_0\b)]$ equals
\begin{equation}\label{eq:mainidentity}
\textcolor{purple}{A_k(z_1,\ldots,z_k)}=\textcolor{blue}{\on{sgn}_k\cdot \prod_{j=1}^{i^*_k} \bbbeta_j(f_{x_k})}
\end{equation}
where $x_k\in\R$ is arbitrarily close but greater than the $x$-coordinate of the $k$th crossing of $\pi_\beta$, $\bbbeta_j(f_{x_k})$ is the $j$th Bruhat number of $f_{x_k}$, and $\on{sgn}_k:=(-1)^{\lfloor i_k^*/2\rfloor+i_k^*(n-1)}$ with $i_k^*:=n-i_k$.
\end{ftheo}

The Comparison Lemma is proven in \cref{ssec:proof_comparisonlemma}. An intuitive description of it is as follows. Let $\beta=\s_{i_1}\s_{i_2}\cdots \s_{i_l}$ be a positive braid word in $n$-strands and of length $l\in\N$. There are two types of data associated to $\beta$ that have been studied in the literature:

\begin{enumerate}
    \item {\it \textcolor{Blue}{Morse data}}: the Cerf diagram $\pi_\beta$, as depicted in \cref{fig:FrontsBeta}. This is data about the critical points of 1-parametric families of real Morse functions. These are studied in depth in \cite{Cerf70}.\\
    
    \item {\it \textcolor{purple}{Cluster data}}: the cluster algebra $\Z[X(w_0\beta)]$, studied in \cite{CGGLSS}. The relation between cluster algebras and wall-crossing is studied in \cite{GHKK,KontsevichSoibelman_Wallcrossing}.
\end{enumerate}

For the Morse data, we fix an index $\iota\in\N$ large enough and assume that at every cusp of the Cerf diagram the Morse index at the upper strand always equals $(\iota+1)$. Thus the Morse index at the lower strand of all cusps is $\iota$. We also assume that all handleslides shall occur precisely to the left of a value exchange, i.e.~a crossing, or a death, i.e.~a right cusp. This is the $A$-form studied in \cite{Henry_2015}, cf.~also \cite[Section 5]{HenryRutherford15}. Note that any Cerf diagram decorated with handleslides can be connected, via Cerf diagrams, to a Cerf diagram in normal form: this follows from the moves \cite[Figure 3.6-3.8]{Henry09_Thesis} refined integrally and at the function level. The handleslide mark for the handleslide associated to the $k$th crossing will be denoted by the variable $z_k$, $k\in[1,l]$. We denote by $(\pi_\beta,\{f_x\})$ any pair consisting of a 1-parametric family of Morse functions $\{f_x\}$ in this normal form, $x\in\R$, with $\pi_\beta$ as its Cerf diagram.\\

   \noindent In the $(x,z)$-coordinates of \cref{fig:FrontsBeta}, we always assume that the 1-parametric family of generalized Morse functions $f_x:\R^N\lr\R$ is on some $\R^N$, for $N\in\N$ fixed and large enough, and linear at infinity. The Bruhat numbers of a Morse function $f_x$ are introduced in \cite{pushkar2021enhancedbruhatdecompositionmorse} and will be denoted by $\bbbeta_j(f_x)$, $j\in[1,n]$.\\

For the cluster data, cluster structures on braid varieties are studied in \cite{CGGLSS}, cf.~also \cite{CGGS,CasalsWeng}: braid varieties are a class of smooth algebraic varieties over $\Z$ and their $\Z$-algebras of regular functions $\Z[X(w_0\beta)]$ are shown to be cluster algebras in \cite[Theorem 1.1]{CGGLSS}. By definition, $X(w_0\beta)$ is a moduli space parametrizing linear tuples of flags whose relative positions are dictated by $\beta$. There is a defining embedding $X(w_0\beta)\sse\Z^l$, where the ambient coordinates $z_1,\ldots,z_l\in\Z^l$ can be chosen to be in bijection with the crossings of $\beta$, $z_k$ associated to $\s_{i_k}$. We denote the cluster variables of the initial seed given by the right-inductive weave of $\beta w_0$ by $A_k$, $k\in[1,l]$, cf.~\cite[Section 5.3]{CGGLSS}. By construction, the cluster variable $A_k=A_k(z_1,\ldots,z_k)\in\Z[X(w_0\beta)]$ can be expressed as a regular function on the first $k$ ambient variables $z_1,\ldots,z_k$. In the context of wall-crossing, the chamber corresponding to this initial cluster seed is given by the open set $\{A_1\neq0,\ldots,A_l\neq0\}\sse X(w_0\beta)$.

In a nutshell, to connect the Morse data and the cluster data, the flags parametrized by a point in $X(w_0\beta)$ can be understood as a filtration of the integral homologies of the sublevel sets of the functions $f_x$ producing the Cerf diagram $\pi_\beta$. Intuitively, $X(w_0\beta)$ is a finite-dimensional model that parametrizes part of the space of all 1-parametric families of Morse functions with Cerf diagram $\pi_\beta$, cf.~\cref{ssec:cats} for more context. The cluster variable $A_k$ on $X(w_0\beta)$, on the left hand side of \maintheoref, algebraically measures the transversality between the initial standard flag and the flag right after the $k$th crossing. In line with this, and appropriately described, the products of certain Bruhat numbers of $f_{x}$ can be understood as a measure of the relative position of $f_x$ with respect to $f_{-\infty}$: such relative position can be expressed in terms of the handleslides and value exchanges needed to go from $f_{-\infty}$ to the given $f_x$, and this is quantitatively measured by the right hand side of \maintheoref.\\


\noindent{\bf Acknowledgements}: I am grateful to the organizers of the 2025 Georgia International Topology Conference who, once more, created a wonderful environment for geometers and topologists from all around to gather, interact, and discuss exciting new developments in this area of mathematics. I also thank F.~Laudenbach, whose work on Morse theory continues to inspire. The author is supported by the National Science Foundation via DMS-2505760 and DMS-1942363.\hfill$\Box$




\section{The argument}

The goal of this section is to prove \maintheoref, which is done in \cref{ssec:proof_comparisonlemma}. The reader is referred to the excellent article \cite{pushkar2021enhancedbruhatdecompositionmorse} for the enhanced Bruhat decomposition and the Bruhat numbers of Morse functions, and to \cite{CGGLSS} for braid varieties and cluster structures on their rings of functions.

\subsection{Initial setup} Let $\beta=\s_{i_1}\s_{i_2}\cdots \s_{i_l}\in\on{Br}^+_n$ be an $n$-stranded positive braid word and $(\pi_\beta,\{f_x\})$ its associated Cerf diagram, as depicted in \cref{fig:FrontsBeta}, with $f_x:\R^N\lr\R$ a 1-parametric family of generalized Morse functions, in normal $A$-form and linear at infinity. Let $f:\R^n\lr\R$ be a Morse function of the form $f=f_x$ where $x$ lies in-between the $x$-coordinates of the two sets of nested cusps of $\pi_\beta$, and it is not the $x$-coordinate of any crossing either. Such an $f$ has only critical points of indices $\iota$ and $\iota+1$, and precisely $n$ of each. The critical points of index $\iota$ are denoted by $q_1,\ldots,q_n$ and those of index $\iota+1$ by $p_1,\ldots,p_n$, and we always assume
$$f(q_n)<f(q_{n-1})<\ldots<f(q_1)<f(p_{n-1})<f(p_{n-2})<\ldots<f(p_1).$$

\noindent For the corresponding elements of the Morse complex $CM_*(f)$, $q_i\in CM_{\iota}(f)$ and $p_i\in CM_{\iota+1}(f)$ are identified with the column vector $e_i=(0,\ldots,0,1,0,\ldots,0)^t$ with the unique entry 1 in the $i$th position. This explicitly describes isomorphisms $CM_{\iota} (f)\cong\Z^n$ and $CM_{\iota+1}(f)\cong\Z^n$.

\begin{example}\label{ex:leftmost_differential} Consider an $x$-coordinate $x_0$ with value arbitrarily close but greater than the $x$-coordinate of the rightmost left cusp of $\pi_\beta$. In these coordinates above, and given that no handleslides occur before such an $x_0$ value, the Morse differential $\dd$ of $f_x$ can be expressed as
$$\dd:\Z^n\lr\Z^n,\quad \dd(p_i)=(-1)^{i+1} q_{w_0(i)},\quad i\in[1,n],$$
where $w_0$ is the longest element of $S_n$, i.e.~the half-twist. In particular, this differential $\dd$ can be expressed as a permutation matrix in $\SL_n$ lifting the element $w_0$ from its Weyl group $S_n$.
\qed
\end{example}

\subsection{An explicit pinning}

For $\SL_n$, we choose the following matrices $x_i(z)$ and $B_i(z)$:
\begin{equation}\label{eq:pinning1}
x_{i}(z) := \begin{pmatrix}
1 & \cdots  & & & \cdots & 0\\
\vdots & \ddots & & & & \vdots\\
0 & \cdots & 1 & z & \cdots & 0\\
0 & \cdots & 0 & 1 & \cdots & 0\\
\vdots &  & & &\ddots & \vdots\\
0 & \cdots & & & \cdots & 1\\
\end{pmatrix},\qquad
B_{i}(z) :=\begin{pmatrix}
1 & \cdots  & & & \cdots & 0\\
\vdots & \ddots & & & & \vdots\\
0 & \cdots & z & -1 & \cdots & 0\\
0 & \cdots & 1 & 0 & \cdots & 0\\
\vdots &  & & &\ddots & \vdots\\
0 & \cdots & & & \cdots & 1\\
\end{pmatrix},
\end{equation}
where the $(2\times 2)$-submatrices sit at the $i$th and $(i + 1)$st rows and columns. The variable $z$ is thus the $(i,i+1)$-entry of $x_i(z)$, and the $(i,i)$-entry of $B_i(z)$. This can be seen as part of a choice of pinning for $\SL_n$, cf.~e.g.~ \cite[Section 3.4]{CGGLSS}. Similarly, consider the $P_i(z)$ matrix
\begin{equation}\label{eq:pinning1}
P_{i}(z) := \begin{pmatrix}
1 & \cdots  & & & \cdots & 0\\
\vdots & \ddots & & & & \vdots\\
0 & \cdots & 0 & 1 & \cdots & 0\\
0 & \cdots & -1 & z & \cdots & 0\\
\vdots &  & & &\ddots & \vdots\\
0 & \cdots & & & \cdots & 1\\
\end{pmatrix},
\end{equation}
where the $(2\times 2)$-submatrices sit at the $i$th and $(i + 1)$st rows and columns. Given a positive braid word $\beta=\s_{i_1}\cdots\s_{i_l}$ we define
$$B_\beta(z_1,\ldots,z_l):=B_{i_1}(z_1)\cdots B_{i_l}(z_l)\qquad\mbox{and}\qquad P_\beta(z_1,\ldots,z_l):=P_{i_l}(z_l)\cdots P_{i_1}(z_1).$$
Note that $P_i(z)^{-1}:=B_i(z)$ and thus
\begin{equation}\label{eq:B_is_inverseP}
P_{\beta}(z)^{-1}=B_\beta(z).
\end{equation} 

\noindent We denote by $\Delta^\sw_i$, $\Delta^\se_i$ and $\Delta^\nw_i$, the $i$th principal minor starting from the lower-left corner (south-west), the lower-right corner (south-east) and the upper-left corner (north-west), respectively. In particular, $\Delta^\nw_i$ is the $i$th leading principal minor, i.e.~the generalized minor for the fundametnal weight $\omega_i$ in $\SL_n$, cf.~e.g.~\cite[Section 2.3]{BFZ05}.

\begin{example}\label{ex:minors} Let ${\U\sse\SL_n}$ be the unipotent subgroup of upper unitriangular matrices and $\T\sse\SL_n$ the Cartan subgroup of diagonal matrices. Suppose that $M\in\SL_n$ lies in the unipotent Bruhat cell $M\in\U w_0\T\U\sse\SL_n$, then
$$\Delta^\sw_i(M)=(-1)^{i(i-1)/2}\cdot\Delta^\nw_i(t),$$
where $t\in T$ is the Cartan representative of $M$ in the cell. Indeed, the Cauchy-Binet formula implies that $\Delta^\sw_i(M)=\Delta^\sw_i(w_0t)$, while $\Delta^\sw_i(w_0t)=(-1)^{i(i-1)/2}\Delta^\nw_i(t)$ is a direct computation.\qed

\end{example}

\subsection{Coordinatizing Morse Differentials}\label{ssec:coordinatizing_Morse} The Morse differentials $\dd^h_{l}$ and $\dd^h_r$ before and after a handleslide of $q_i$ along $q_{i-1}$ with handleslide marked by a variable $z$, $i\in[2,n]$, are related by
\begin{equation}\label{eq:differentials_handleslides}
\dd^{(h)}_r=x_i(-z)\dd^{(h)}_l.
\end{equation}
See \cref{fig:Handleslide_Crossings} (left) for a depiction of the location of such differentials in part of the Cerf diagram.
\begin{center}
	\begin{figure}[h!]
		\centering
		\includegraphics[scale=1]{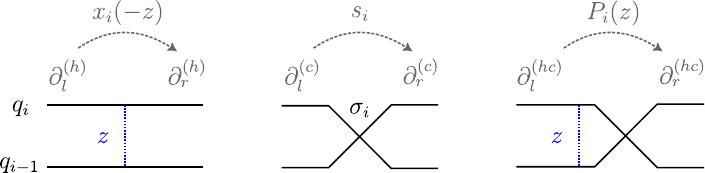}
		\caption{A depiction of \cref{eq:differentials_handleslides} on the left, \cref{eq:differentials_crossing} at the center, and \cref{eq:differentials_handleslide_crossing} on the right. The Cerf diagram is in black, the handleslide marks in dashed blue, and the location and relation between the Morse differentials in gray.}\label{fig:Handleslide_Crossings}
	\end{figure}
\end{center}

The Morse differentials $\dd^c_{l}$ and $\dd^c_r$ before and after exchanging the values of $q_{i-1}$ and $q_i$, $i\in[2,n]$, i.e.~a $\s_i$ crossing in the Cerf diagram as in \cref{fig:Handleslide_Crossings} (center), are related by
\begin{equation}\label{eq:differentials_crossing}
\dd^{(c)}_r=s_i\dd^{(c)}_l,
\end{equation}
where $s_i=P_i(0)$ is a permutation matrix lifting the simple transposition $s_i\in S_n$.

\begin{remark} \noindent $(i)$ The Bruhat numbers are defined independently of a choice of metric, cf.~\cite[Section 0.2]{pushkar2021enhancedbruhatdecompositionmorse} and thus cannot change under a handleslide. This is reflected algebraically in \cref{eq:differentials_handleslides} above: since $x_i(z)$ is unipotent upper-triangular, the rook matrices associated to any matrix $M$ and $x_i(\pm z)M$ must coincide, and thus their Bruhat numbers are equal. That said, the Bruhat numbers after a handleslide between two critical points followed by an exchange of their values do depend on the handleslide mark.

\noindent $(ii)$ In Example \ref{ex:leftmost_differential}, Equations (\ref{eq:differentials_handleslides}) and (\ref{eq:differentials_crossing}) the minus signs in $\dd$, $x_i(-z)$ and $s_i$ are due to suitable choices of relative orientations of the stable and unstable cells, made to match the pinning for $\SL_n$ in \cite[Section 3.4]{CGGLSS}.\qed
\end{remark}

\noindent By Equations (\ref{eq:differentials_handleslides}) and (\ref{eq:differentials_crossing}), the differentials $\dd^{(hc)}_{l}$ and $\dd^{(hc)}_r$ before and after a pair of handleslide mark $z$ and a value exchange, both between $q_i$ and $q_{i-1}$ as in \cref{fig:Handleslide_Crossings} (right), $i\in[2,n]$, satisfy
\begin{equation}\label{eq:differentials_handleslide_crossing}
\dd^{(hc)}_r=s_i\dd^{(c)}_l=s_ix_i(-z)\dd^{(hc)}_l=P_i(z)\dd^{(hc)}_l,
\end{equation}
as $s_i\cdot x_i(-z)=P_i(z)$ by direct computation. Therefore, in a Cerf diagram where all the handleslides occur exactly to the left of a crossing, the differential $\dd_k$ right after the $k$th crossing is given by
\begin{equation}\label{eq:differentials_crossing2}
\dd_k=P_{i_k}(z_k)\cdots P_{i_2}(z_2)P_{i_1}(z_1)\dd_0,
\end{equation}
where $\dd_0=P_{w_0}(0)$ is a permutation matrix for the longest permutation $w_0\in S_n$, as in Example \ref{ex:leftmost_differential}.

\begin{lemma}[Bruhat numbers from handleslide marks]\label{prop:bruhatnumbers} Let $\beta=\s_{i_1}\s_{i_2}\cdots\s_{i_l}$ and $(\pi_\beta,\{f_x\})$ its associated Cerf diagram in normal form. Consider the Morse differential $\dd_k$ of a Morse function $f_k$ right after the $k$th crossing, and its Bruhat numbers $\bbbeta_j(f_k)$, $j\in[1,n]$. Then
\begin{equation}\label{eq:bruhatnumbers}
\displaystyle\prod_{j=1}^{i^*_k} \bbbeta_j(f_k) =(-1)^{i^*_k(i^*_k-1)/2}\cdot\Delta^{\sw}_{i^*_k}(P_{i_k}(z_{i_k})\cdots P_{i_1}(z_1)w_0).
\end{equation}
if $P_{i_k}(z_{i_k})\cdots P_{i_1}(z_1)w_0\in \U w_0\T\U$ lies in the unipotent Bruhat cell for $w_0\in S_n$.
\end{lemma}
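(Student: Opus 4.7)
The plan is to combine the coordinatization of the Morse differential built up through equations (2.6)--(2.6) with the enhanced Bruhat decomposition from \cite{pushkar2021enhancedbruhatdecompositionmorse}, and then translate from the \emph{north-west} to the \emph{south-west} corner via \cref{ex:minors}.

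First, I would identify the matrix appearing in the statement with the Morse differential $\dd_k$ itself: by \cref{ex:leftmost_differential} the leftmost differential is $\dd_0=P_{w_0}(0)$, and by \cref{eq:differentials_crossing2} we have
\[
\dd_k \;=\; P_{i_k}(z_k)\cdots P_{i_1}(z_1)\,P_{w_0}(0),
\]
so the matrix $P_{i_k}(z_k)\cdots P_{i_1}(z_1)\,w_0$ on the right-hand side of \eqref{eq:bruhatnumbers} is nothing but $\dd_k$ viewed as a matrix in $\SL_n$ (with the convention that $w_0$ denotes the specific permutation matrix $P_{w_0}(0)$). So the claim reduces to an identity between the product of Bruhat numbers of $f_k$ and the south-west minor $\Delta^{\sw}_{i^*_k}(\dd_k)$.

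Next, I would invoke the main interpretation of Bruhat numbers from \cite{pushkar2021enhancedbruhatdecompositionmorse}. Under the hypothesis that $\dd_k\in \U w_0\T\U$ lies in the open (unipotent) Bruhat cell for $w_0$, write the Bruhat factorization $\dd_k=u_1\,w_0\,t\,u_2$ with $u_1,u_2\in \U$ and $t=\mathrm{diag}(t_1,\dots,t_n)\in\T$. The enhanced Bruhat decomposition identifies the Bruhat numbers of the Morse function $f_k$ with (ratios of) the diagonal entries of this Cartan component; in particular, in the normalization used in ibid.\ one has
\[
\prod_{j=1}^{m}\bbbeta_j(f_k) \;=\; \Delta^{\nw}_{m}(t) \qquad\text{for every } m\in[1,n].
\]
This is the step I expect to require the most bookkeeping: one must check that the conventions of \cite{pushkar2021enhancedbruhatdecompositionmorse} for indexing Bruhat numbers (which are intrinsic to $f_k$) match the conventions for the pinning $(x_i,B_i,P_i)$ fixed in \eqref{eq:pinning1}, including the placement of signs in \cref{ex:leftmost_differential}, \eqref{eq:differentials_handleslides}, and \eqref{eq:differentials_crossing}. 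The remark following \eqref{eq:differentials_handleslide_crossing} already indicates that the signs have been chosen precisely to be compatible with the $\SL_n$-pinning of \cite[Section 3.4]{CGGLSS}, so the identification should be direct.

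Finally, I would apply \cref{ex:minors} with $M=\dd_k$ and $m=i^*_k$ to obtain
\[
\Delta^{\sw}_{i^*_k}(\dd_k)\;=\;(-1)^{i^*_k(i^*_k-1)/2}\,\Delta^{\nw}_{i^*_k}(t)\;=\;(-1)^{i^*_k(i^*_k-1)/2}\prod_{j=1}^{i^*_k}\bbbeta_j(f_k),
\]
which, after multiplying both sides by $(-1)^{i^*_k(i^*_k-1)/2}$, is exactly \eqref{eq:bruhatnumbers}. The rest of the argument is formal, so the one substantive input is the conversion from the algebraic Cartan datum of $\dd_k$ to the Morse-theoretic Bruhat numbers of $f_k$, which amounts to matching the conventions of \cite{pushkar2021enhancedbruhatdecompositionmorse} with the pinning choices made earlier in this section.
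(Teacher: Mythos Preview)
Your proposal is correct and follows essentially the same approach as the paper: the paper's proof consists of the single sentence ``The statement follows from \cref{eq:differentials_crossing2} and Example~\ref{ex:minors},'' and your three steps (identify $\dd_k$ with the product via \eqref{eq:differentials_crossing2}, read off $\prod_j\bbbeta_j(f_k)=\Delta^{\nw}(t)$ from the enhanced Bruhat decomposition, then apply Example~\ref{ex:minors}) are exactly an unpacking of that sentence. Your added discussion of convention-matching is reasonable bookkeeping but not a different method.
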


\begin{proof}
The statement follows from \cref{eq:differentials_crossing2} and Example \ref{ex:minors}.
\end{proof}

\subsection{Proof of \maintheoref}\label{ssec:proof_comparisonlemma} \cref{eq:mainidentity} is obtained from the following equalities:
\begin{equation*}
\begin{split}
\displaystyle\prod_{j=1}^{i^*_k} \bbbeta_j(f_k) & \stackrel{(1)}{=}\rho_1(k)\cdot\Delta^{\sw}_{i^*_k}(P_{i_k}(z_{i_k})\cdots P_{i_1}(z_1)w_0) \\
& \stackrel{(2)}{=} \rho_1(k)\rho_2(k)\cdot\Delta^{\se}_{i_k^*}( P_{i_k}(z_{i_k})\cdots P_{i_1}(z_1)) \\
& \stackrel{(3)}{=} \rho_1(k)\rho_2(k)\cdot \Delta^{\nw}_{i_k}( B_{i_1}(z_{1})\cdots B_{i_k}(z_k)) \\
& \stackrel{(4)}{=} \rho_1(k)\rho_2(k)\cdot A_k(z_1,\ldots,z_k).
\end{split}
\end{equation*}
Here we denoted $\rho_1(k):=(-1)^{i_k^*(i^*_k-1)/2}=(-1)^{\lfloor i^*_k/2\rfloor}$ and $\rho_2(k):=(-1)^{i_k^*(n-1)}$ for the signs. Each of the equalities can be justified as follows:

\begin{enumerate}
    \item Equality 
(1) is \cref{eq:bruhatnumbers} in Lemma \ref{prop:bruhatnumbers}. Since $P_{i_k}(z_{i_k})\cdots P_{i_1}(z_1)w_0\in\U w_0\T\U$ is an open condition, we can and do assume it here: the resulting equalities under this assumption extend to the equalities between the corresponding global regular functions on $X(w_0\beta)$.\\

\item Equality 
(2) is implied by the linear algebra fact that $\Delta^{\sw}_{m}(Mw_0)=(-1)^{m
(n-1)}\cdot\Delta^{\se}_{m}(M)$ for any $M\in\on{Mat}_{n\times n}(\Z)$, applied to $m=i_k^*$ and $M=P_{i_k}(z_{i_k})\cdots P_{i_1}(z_1)$.\\

\item Equality 
(3) follows from the fact that the $m$th leading principal minor of an invertible $(n\times n)$-matrix coincides with the $(n-m)$-th trailing principal minor of its inverse. That is, we are using the linear algebra identity $\Delta^{\se}_{m}(M)=\Delta^{\nw}_{n-m}(M^{-1})$ for any $M\in\on{GL}_{n}(\Z)$, applied to $m=i_k^*$ and $M=P_{i_k}(z_{k})\cdots P_{i_1}(z_1)$, and so $M^{-1}=B_{i_1}(z_{1})\cdots B_{i_k}(z_k)$ by \cref{eq:B_is_inverseP}.\\

\item Equality 
(4) is a consequence of \cite[Theorem 5.12]{CGGLSS} and the last line of equalities in \cite[Section 3.7]{CGGLSS}, where the cluster variables are expressed as generalized minors.\qed
\end{enumerate}

\begin{remark}
$(i)$ Note that $\bbbeta_j(f_k)\in\Q(z_1,\ldots,z_2)$ are rational functions on the $z$-variables, and typically not polynomial. In fact, their definition requires a field. Nevertheless, \maintheoref and \cite[Theorem 7.6]{CGGLSS} together imply that the product of Bruhat numbers on the right hand side of \cref{eq:mainidentity} is a polynomial in the $z$-variables with integer coefficients.

\noindent $(ii)$ The equations cutting out a braid variety directly translate in Cerf theory as the conditions for the critical points of $f_x$ to be able to cancel in pairs as dictated by the right cusps when $x$ is close but smaller than the $x$-coordinate of the leftmost right cusp. Such conditions themselves cut out equations in terms of the handleslide marks, and those coincide with the equations in \cite[Corollary 3.7]{CGGLSS}, cf.~also \cite[Definition 3.15 \& Lemma 3.16]{CGGLSS}.\qed
\end{remark}

\subsection{A final comment}\label{ssec:cats} There is a precise connection between parametric Morse theory and the study of Legendrian submanifolds. For instance, normal rulings, as introduced and studied by D.~Fuchs, P.~Pushkar and Y.~Chekanov and their collaborators, combinatorially capture Barannikov's pairing \cite{barannikov:hal-01745109} of critical points. Another instance is the theory of generating families for Legendrian submanifolds and their associated invariants, as developed by many, including C.~Viterbo, D.~Th\'eret, and L.~Traynor, see e.g.~\cite{BourgeoisSabloffTraynor15_GeneratingFunctions,EliashbergGromov_GeneratingFamilies,Theret99} and references therein.

From the viewpoint of Cerf theory, it would be enlightening to rigorously name a reasonably behaved category or space of all the 1-parametric families of Morse functions with a given Cerf diagram, and understand how these categories or spaces relate as the Cerf diagram varies, e.g.~ under front homotopies, clasp moves and surgeries, cf.~\cite[Section 4]{Cerf_outline} and \cite[Chapter I.2]{HatcherWagoner}. In the current scientific context, given a Legendrian submanifold $\La\sse (J^1(B),\xi_{\on{st}})$, the following steps would seem both reasonable and an exciting valuable addition to the literature:

\begin{enumerate}
    \item Rigourously set up a stable $\infty$-category $\G\F_\La$ whose objects are given by generating family spectra for $\La$, i.e.~the sublevel set spectra associated to parametric families $f_x:M\lr\R$, $x\in B$, whose Cerf diagram yields the front projection of $\La$ under the natural projection $J^1(B)\lr B\times\R$ sending $j^1f$ to $(b,f(b))$, cf.~\cite[Definition 3.1]{tanaka2025stablehomotopyinvariantlegendrians}. The morphisms should naturally be a spectral enhancement of generating family homology, cf.~e.g.~\cite[Section 1.4]{tanaka2025stablehomotopyinvariantlegendrians}, discussions therein, and variations thereof, as $\G\F_\La$ is preferably constructed to be symmetric monoidal and unital.\\

    \item Show that $\G\F_\La$ is a Legendrian invariant and it is smooth over the sphere spectrum $\mathbb{S}$. In addition, show that the functor that locally records the spectral local system on $\La$ given by the framing of the unstable submanifolds admits a relative left Calabi-Yau structure, as defined in \cite[Section 4.1.10]{brav2023cyclicdeligneconjecturecalabiyau}. This latter property should be implied by the unlinked copy argument use to prove for Sabloff duality, cf.~\cite[Section 4]{MR2284060} or \cite[Prop.~4.1]{EkholmEtnyreSabloff09}, with the relative fundamental class now belonging to the corresponding (relative $S^1$-invariant) topological Hochschild homology.\\
    
    \noindent In the same vein, show the existence of the appropriate functors between $\G\F_{\La}$ and $\G\F_{\La'}$ if $\La$ and $\La'$ are related by Legendrian surgeries or clasp moves, and establish their properties, e.g.~forms of full faithfulness.\\

    \item For a space $\mathsf{g}\mathsf{f}_\La$, rather than a category $\G\F_\La$, I suggest we consider the spectral moduli stack of pseudoperfect objects associated to a stable $\infty$-category, cf.~ \cite[Section 5.3]{AntieauGepner} which extends \cite[Theorem 0.2]{ToenVaquie07} to this setting by taking $R=\mathbb{S}$. If non-empty, one might expect that these spaces $\mathsf{g}\mathsf{f}_\La$ admit rich geometric structures coming from the symplectic topology of Lagrangian fillings of $\La$. For instance, these moduli stacks should be spectrally smooth and the $\infty$-analogue of shifted symplectic, by the categorical properties of (2), and $\mathsf{g}\mathsf{f}_\La$ be equivalent to a substack of $\mathsf{g}\mathsf{f}_{\La'}$ if $\La'$ is a Legendrian surgery of $\La$.\\ 
    
    \noindent Each of the (classical) points of $\mathsf{g}\mathsf{f}_\La$ should be realizable by an extension to the symplectization of the generating spectrum of the corresponding object in $\G\F_\La$, itself generating a possibly immersed (unobstructed) Lagrangian filling of $\La$. By construction, cf.~\cite[Prop.~5.9]{AntieauGepner}, the cotangent complex at such a point shall coincide with the desuspended generating family spectrum of such an extension, crystallizing the fact that the cohomology groups of a Lagrangian filling (or its framed cobordism class or the appropriate spectrum) appearing in the corresponding Seidel isomorphisms are to be understood as infinitesimal deformation groups.\footnote{This is in line with the fact that the cochain complex $C^*(L,k)[1]$ is equivalent to the cotangent complex at the trivial local system in the moduli of pseudoperfect objects for the category of $\infty$-local systems on $L$.}\\
    
    \noindent In addition, these moduli stacks $\mathsf{g}\mathsf{f}_\La$ should admit a generalization of the notion of a cluster structure, by the same symplectic geometric principles as in \cite{CasalsWeng,CGGLSS}, e.g.~an open cluster substack for each submaximal ruling of $\La$. In particular, the braid varieties $X(w_0\beta)$ discussed in the introduction yield, when quotiented by a certain torus action, a stack isomorphic to the simplest connected component of $\mathsf{g}\mathsf{f}_\La$ if $\La\sse(\R^3,\xi_\st)$ is the rainbow closure of $\beta$.\\
\end{enumerate}

\begin{remark}
(i) As per usual in the trichotomy given by Floer theory, generating families and microlocal sheaves, one is to expect an equivalence of stable $\infty$-categories between such spectrally enhanced $\G\F_\La$, perfect modules over the appropriate spectral enhancement of the Legendrian dg-algebra (see ongoing work of Lipshitz-Ng-Sarkar for the case of knots), and with the corresponding category of spectral sheaves on $B\times\R$ with singular support on $\La$.

\noindent (ii) Despite the expected equivalences in (i), there is significant merit in developing and establishing (1),(2) and (3) above within the methods and context of parametric Morse theory and generating families. These different approaches each have their own merits and once each of the facets of this trichotomy is properly developed on its own, they can be better compared and complement each other. It is not ideal to sit on a cuttie-stool if one leg wobbles: each is equally important and can hopefully be appreciated as such.
\qed
\end{remark}


\section{A few examples}\label{sec:examples}

Let $\beta=\s_{i_1}\ldots\s_{i_l}$ be an $n$-stranded positive braid word. In order to produce arbitrary examples with ease, the reader is referred to the file ``BruhatCluster.nb'', available in the author's website. In this Mathematica file, I implemented a function
$$\on{BruhatAndClusterVariables}[\{i_1,\ldots,i_l\}, n]$$
that inputs the list of indices $\{i_1,\ldots,i_l\}$ for the crossings of $\beta$ and the number of strands $n$. For each $k\in[1,l-1]$, this function outputs:

\begin{enumerate}
    \item {\it Morse data}: the differential $\dd_k$ between the $k$th and $(k+1)$th crossing, its associated rook matrix in the top-dimensional cell for its maximal ruling, and the corresponding signed product of Bruhat numbers as displayed in the right hand side of \cref{eq:mainidentity}.\\

    \item {\it Cluster data}: the braid matrix associated to the braid consisting of the first $k$ crossings, and the cluster variable associated to the $k$th crossing, as displayed in the left hand side of \cref{eq:mainidentity}.
\end{enumerate}

\noindent In particular, the reader can use it to quickly verify the equality \maintheoref in any reasonably sized example.

\subsection{2-stranded case}\label{ssec:examples_2n_torus} Consider the 2-stranded braid word $\b=\s_1^l\in\Br_2^+$. In this $2\times2$ case, a matrix in $\SL_2$ explicitly factorizes as
\begin{equation}\label{eq:Bruhat2strands1}
\left(\begin{array}{cc}
 a & b \\
 c & d \\
\end{array}
\right)=\left(\begin{array}{cc}
 1 & \frac{a}{c} \\
 0 & 1 \\
\end{array}
\right)\left(\begin{array}{cc}
 0 & b-\frac{ad}{c} \\
 c & 1 \\
\end{array}
\right)\left(\begin{array}{cc}
 1 & \frac{d}{c} \\
 0 & 1 \\
\end{array}
\right)=\left(\begin{array}{cc}
 1 & \frac{a}{c} \\
 0 & 1 \\
\end{array}
\right)\left(\begin{array}{cc}
 0 & -\frac{1}{c} \\
 c & 1 \\
\end{array}
\right)\left(\begin{array}{cc}
 1 & \frac{d}{c} \\
 0 & 1 \\
\end{array}
\right)
\end{equation}
if $c\neq0$, and otherwise factorizes as
\begin{equation}\label{eq:Bruhat2strands2}
\left(\begin{array}{cc}
 a & b \\
 0 & d \\
\end{array}
\right)=\left(\begin{array}{cc}
 a & 0 \\
 0 & d \\
\end{array}
\right)\left(\begin{array}{cc}
 1 & \frac{b}{a} \\
 0 & 1 \\
\end{array}
\right)=\left(\begin{array}{cc}
 a & 0 \\
 0 & \frac{1}{a} \\
\end{array}
\right)\left(\begin{array}{cc}
 1 & \frac{b}{a} \\
 0 & 1 \\
\end{array}
\right).
\end{equation}
Since $n=2$, there is only one type of crossing $\s_{i_j}$, with index $i_j=i_j^*=1$. Therefore, in the above notation for the entries, Equations (\ref{eq:Bruhat2strands1}) and (\ref{eq:Bruhat2strands2}) imply that the Bruhat numbers are $c$ and $1/c$ if $c\neq0$, or $a$ and $1/a$ otherwise. In particular, in the open top-dimensional chart associated to a maximal ruling, where the Bruhat numbers are non-vanishing, the only non-trivial product of Bruhat numbers is the entry $c$ itself. In short, the right hand side of \cref{eq:mainidentity} is simply $\textcolor{blue}{\bbbeta_1(f_k)}$, which can be computed as the lower-left entry of $\dd_k=P_1(z_k)\cdots P_1(z_1)w_0$, $k\in[1,l]$. This entry coincides with the upper-left entry of the braid matrix $B_{\s^k}(z_1,\ldots,z_k)=B_1(z_1)\cdots B_1(z_k)$: such upper-left entry is $\textcolor{purple}{A_k(z_1,\ldots,z_k)}$ by \cite[Section 3.7]{CGGLSS}, and thus \cref{eq:mainidentity} is verified.

\begin{remark}
In this 2-stranded case, the resulting functions in the $z$-variables are related to Euler continuants, e.g.~cf.~\cite[Section 2.2.5]{Hughes_2023} or \cite[Section 5]{MR3837919}.\qed
\end{remark}

\subsection{A 3-stranded example} Consider the 3-stranded braid word $\beta=\s_1\s_2\s_1\s_2\s_1^2\s_2^2\s_1$. Let us consider the Morse function $f_x$ with $x$-coordinate right after the $x$-coordinate of the seventh crossing of $\beta$. This crossing is $\s_2$ and thus $i_7=2$ and $i_7^*=n-i_7=1$ as $n=3$. By the discussion in \cref{ssec:coordinatizing_Morse}, the associated Morse differential can be written as
$$\dd_7=P_2(z_7)P_1(z_6)P_1(z_5)P_2(z_4)P_1(z_3)P_2(z_2)P_1(z_1)w_0:\Z^3\lr\Z^3$$
which reads as
$$\dd_7=\left(
\begin{array}{ccc}
 z_2 z_5-1 & z_1 z_5 & z_5 \\
 z_2 z_4-z_3 & z_1 z_4-1 & z_4 \\
 z_2-\left(z_2 z_5-1\right) z_6+\left(z_2 z_4-z_3\right) z_7 & -z_5 z_6 z_1+z_1-\left(1-z_1 z_4\right) z_7 & -z_5 z_6+z_4 z_7+1 \\
\end{array}
\right).$$
In the locus where the polynomial $\textcolor{orange}{\delta:=-z_5 z_6 z_2+z_4 z_7 z_2+z_2+z_6-z_3 z_7}$ is non-zero, $\dd_7$ we can factorized as $\dd_7=U_1RU_2$ where

$$U_1:=\delta^{-1}\left(
\begin{array}{ccc}
 1 & \left(z_2 z_5-1\right) z_7+z_1 \left(z_4 z_7-z_3 z_5 z_7+1\right) & z_2 z_5-1 \\
 0 & 1 & z_3-z_2 z_4 \\
 0 & 0 & 1 \\
\end{array}
\right),$$
$$R:=\delta^{-1}\left(
\begin{array}{ccc}
 0 & 0 & 1 \\
 0 & z_5 z_6 z_2-z_2+z_1 z_3+z_1 z_4 z_6-z_1 z_3 z_5 z_6-z_6 & 0 \\
 \delta^2 & 0 & 0 \\
\end{array}
\right),$$
$$U_2:=\delta^{-1}\left(
\begin{array}{ccc}
 1 & z_7+z_1 \left(z_5 z_6-z_4 z_7-1\right) & -z_5 z_6+z_4 z_7+1 \\
 0 & 1 & z_4 z_6+z_3 \left(1-z_5 z_6\right) \\
 0 & 0 & 1 \\
\end{array}
\right).$$

\noindent The right hand side of \cref{eq:mainidentity}, which in this case is simply the first Bruhat number $\bbbeta_1(f_x)$ as $i_7^*=1$, is the lower left entry of the rook matrix $R$: it thus equals $\textcolor{blue}{\bbbeta_1(f_x)=\delta}$. The left hand side of \cref{eq:mainidentity}, which is the cluster variable $A_7(z_1,\ldots,z_7)$, is the second leading principal minor of the braid matrix
$B:=B_{\s_1\s_2\s_1\s_2\s_1^2\s_2}(z_1,\ldots,z_7)$, as $i_7=2$. This braid matrix which reads
$$B=\tiny\left(
\begin{array}{ccc}
 z_2-z_1 z_3+\left(-z_2 z_5+z_1 \left(z_3 z_5-z_4\right)+1\right) z_6 & z_1+\left(z_2 z_5+z_1 \left(z_4-z_3 z_5\right)-1\right) z_7 & -z_2 z_5+z_1 \left(z_3 z_5-z_4\right)+1 \\
 z_3 \left(z_5 z_6-1\right)-z_4 z_6 & \left(z_4-z_3 z_5\right) z_7+1 & z_3 z_5-z_4 \\
 z_5 z_6-1 & -z_5 z_7 & z_5 \\
\end{array}
\right).$$
\noindent Therefore $\textcolor{purple}{A_7(z_1,\ldots,z_7)=\delta}$ and \cref{eq:mainidentity} holds.

\subsection{A 4-stranded example} Consider the 3-stranded braid word $\beta=\s_1\s_2\s_3\s_1\s_3\s_2\s_1\s_3\s_2^2\s_1$. Let us consider the Morse function $f_x$ with $x$-coordinate right after the $x$-coordinate of the seventh crossing of $\beta$. This crossing is a $\s_1$ and thus $i_7=1$ and $i_7^*=n-i_7=3$ as $n=4$. By \cref{ssec:coordinatizing_Morse}, the associated Morse differential is
$$\dd_7=P_1(z_7)P_2(z_6)P_3(z_5)P_1(z_4)P_3(z_3)P_2(z_2)P_1(z_1)w_0:\Z^4\lr\Z^4$$
which reads
$$\dd_7=\left(
\begin{array}{cccc}
 -z_3 & -z_2 & -z_1 & -1 \\
 -z_3 z_7 & -z_2 z_7-1 & -z_1 z_7 & -z_7 \\
 -z_3 z_6 & -z_4-z_2 z_6 & -z_1 z_6-1 & -z_6 \\
 1-z_3 z_5 & -z_2 z_5 & -z_1 z_5 & -z_5 \\
\end{array}
\right).$$

\noindent In the locus where
$$\delta_1:= 1-z_3 z_5,\quad\delta_2:=z_4 \left(1-z_3 z_5\right)+z_2 z_6,\quad\textcolor{orange}{\delta_3:=z_3 z_5-z_2 z_7+z_1 \left(z_4 z_7-z_6\right)-1},$$ are non-zero, $\dd_7$ we can factorized as $\dd_7=U_1RU_2$ where

$$R:=\left(
\begin{array}{cccc}
 0 & 0 & 0 & \delta_3^{-1} \\
 0 & 0 & \frac{-z_3 z_5+z_2 z_7+z_1 z_6-z_1 z_4 z_7+1}{z_4 \left(1-z_3 z_5\right)+z_2 z_6} & 0 \\
 0 & \frac{-z_3 z_5 z_4+z_4+z_2 z_6}{z_3 z_5-1} & 0 & 0 \\
 1-z_3 z_5 & 0 & 0 & 0 \\
\end{array}
\right),$$
\noindent and see the code above for $U_1$ and $U_2$. Therefore, given that $i_7^*=3$ and the rook matrix $R$ above, the right hand side of \cref{eq:mainidentity} in this case is
$$\textcolor{blue}{\bbbeta_1(f_x)\bbbeta_2(f_x)\bbbeta_3(f_x)=\delta_1\cdot\frac{\delta_2}{\delta_1}\cdot\frac{\delta_3}{\delta_2}=\delta_3}.$$
\noindent i.e.~the product of the first three entries of $R$ from the lower-left corner. The left hand side of \cref{eq:mainidentity}, i.e.~the cluster variable $A_7(z_1,\ldots,z_7)$, is the first leading principal minor of the braid matrix
$B:=B_{\s_1\s_2\s_3\s_1\s_3\s_2\s_1}(z_1,\ldots,z_7)$, since $i_7=1$. This braid matrix is

$$B=\left(
\begin{array}{cccc}
 z_3 z_5-z_2 z_7+z_1 \left(z_4 z_7-z_6\right)-1 & z_2-z_1 z_4 & z_1 & -z_3 \\
 z_4 z_7-z_6 & -z_4 & 1 & 0 \\
 z_7 & -1 & 0 & 0 \\
 z_5 & 0 & 0 & -1 \\
\end{array}
\right).$$
\noindent Thus $\textcolor{purple}{A_7(z_1,\ldots,z_7)=\delta_3}$ as well, illustrating \cref{eq:mainidentity}. 




\bibliographystyle{alpha}
\bibliography{main}

\end{document}